\documentclass[11pt,reqno]{amsart}
\oddsidemargin = 0cm \evensidemargin = 0cm \textwidth = 16cm

 \usepackage{amssymb,amsfonts,amscd,amsbsy}
\usepackage[mathscr]{eucal}
\usepackage{url}

\newtheorem{thm}{Theorem}[section]

\newtheorem{prop}[thm]{Proposition}

\theoremstyle{definition}

\numberwithin{equation}{section}

\makeatletter
\def\imod#1{\allowbreak\mkern5mu({\operator@font mod}\,\,#1)}
\makeatother

\begin{document}

\title[Mixed mock modular $q$-series]
{Mixed mock modular $q$-series} 
 
\author{Jeremy Lovejoy and Robert Osburn}

\address{CNRS, LIAFA, Universit{\'e} Denis Diderot - Paris 7, Case 7014, 75205 Paris Cedex 13, FRANCE}

\address{School of Mathematical Sciences, University College Dublin, Belfield, Dublin 4, Ireland}

\address{IH{\'E}S, Le Bois-Marie, 35, route de Chartres, F-91440 Bures-sur-Yvette, FRANCE}

\email{lovejoy@liafa.jussieu.fr}

\email{robert.osburn@ucd.ie, osburn@ihes.fr}

\subjclass[2000]{Primary: 33D15; Secondary: 05A30, 11F03, 11F37}
\keywords{mixed mock modular forms, $q$-series transformations, Bailey paris, Bailey chain, partial theta identities, $q$-series identities}

\date{\today}

\begin{abstract} 
Mixed mock modular forms are functions which lie in the tensor space of mock modular forms and modular forms.  As $q$-hypergeometric series, mixed mock modular forms appear to be much more common than mock theta functions.  In this survey we discuss some of the ways such series arise.
\end{abstract}

\maketitle

\section{Introduction}

The mock theta functions are one of Ramanujan's greatest discoveries.   After eighty years of mystery, the last decade has seen these functions finally understood in the context of modular forms.   Mock theta functions are now known to be holomorphic parts of certain weight $1/2$ harmonic Maass forms \cite{On1,Za1,Zw1}.  More generally, the holomorphic part of a weight $k$ harmonic Maass form is called a mock modular form of weight $k$.  If we allow multiplication of a mock modular form by a modular form, then we have a \emph{mixed} mock modular form.   

Mock modular forms in algebra, number theory, and physics are often of the mixed variety.  For example, mixed mock modular forms have recently appeared as characters in the theory of affine Lie superalgebras \cite{ac,BO1}, as generating functions for exact formulas for the Euler numbers of certain moduli spaces \cite{bm}, for Joyce invariants \cite{Me-Ok1} and for linking numbers in 3-manifolds \cite{fm}, in the quantum theory of black holes and wall-crossing phenomenon \cite{dmz}, in relation to other automorphic objects \cite{bk,cr} and in the combinatorial setting of $q$-series and partitions (e.g. \cite{andrewsshort,andrewsrep,andrewsearly,Antube,arz,bhmv,bma,rhoades}).   

As $q$-series, mixed mock modular forms appear to be much more common than mock theta functions.  In this paper we briefly survey some of the ways such series arise.  We begin in Section 2 by recalling two building blocks of mixed mock modular forms, namely Appell functions and indefinite theta functions. In Section 3, we discuss how $q$-series transformations, Bailey pairs, the Bailey chain, and partial theta identities naturally lead to mixed mock modular $q$-series. For example, the multisum

\begin{equation} \label{Bk}
\mathcal{B}^{(k)}(q):= \sum_{n_k \geq n_{k-1} \geq \cdots \geq n_1 \geq 0} \frac{q^{n_k^2+n_{k-1}^2+\cdots+n_1^2}}{(q)_{n_k-n_{k-1}}\cdots(q)_{n_2-n_1}(-q)_{n_1}^2}
\end{equation}

\noindent is a mixed mock modular form for $k \geq 2$ \cite{lo1}. Here we have employed the usual $q$-series notation,
\begin{equation*}
(a_1, a_2, \dotsc, a_j)_n = (a_1, a_2, \dotsc, a_j ; q)_n := \prod_{k=1}^{n}(1-a_1 q^{k-1})(1 - a_2 q^{k-1}) \cdots (1-a_j q^{k-1}),
\end{equation*} 

\noindent valid for $n \in \mathbb{N} \cup \{\infty\}$. 

In Section 4, we give an example of the type of identity one can prove for mixed mock modular forms.  To state this identity, recall that on page 9 of the lost notebook \cite{Ra1}, Ramanujan recorded what are now known as the tenth order mock theta functions.   Two of these, $\chi$ and $X$, are defined by

\begin{equation*} \label{chi}
\chi(q):= \sum_{n \geq 0} \frac{(-1)^n q^{(n+1)^2}}{(-q)_{2n+1}}
\end{equation*}

\noindent and

\begin{equation*} \label{X}
X(q):= \sum_{n \geq 0} \frac{(-1)^n q^{n^2}}{(-q)_{2n}}.
\end{equation*}

\begin{thm} \label{main}
We have
\begin{equation} \label{b1chix}
\mathcal{B}^{(2)}(q) + \frac{2}{(q^2, q^3; q^5)_{\infty}} \chi(q) - \frac{2}{(q, q^4; q^5)_{\infty}} X(q) = -\frac{(q)_{\infty}}{(-q)_{\infty}^2}.
\end{equation}
\end{thm}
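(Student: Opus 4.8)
The plan is to play the two sides of \eqref{b1chix} off against each other through the Bailey chain and known Hecke-type representations of the tenth order mock theta functions. Observe first that $\mathcal{B}^{(k)}(q)$ in \eqref{Bk} is exactly the result of iterating the Bailey chain $k$ times, starting from a Bailey pair $(\alpha_n,\beta_n)$ relative to $1$ with $\beta_n = 1/(-q)_n^2$ (so that $\mathcal{B}^{(1)}(q)$ is Ramanujan's third order mock theta function $f(q)$). Pushing Bailey's lemma to the limit $\rho_1,\rho_2 \to \infty$ therefore gives
\begin{equation*}
\mathcal{B}^{(2)}(q) = \frac{1}{(q)_\infty}\sum_{n\geq 0} q^{2n^2}\alpha_n.
\end{equation*}
The first task is to pin down $\alpha_n$ explicitly: it is the classical $\alpha$-sequence attached to $1/(-q)_n^2$, essentially a finite theta sum, and inserting it turns $\mathcal{B}^{(2)}(q)$ into a Hecke-type double sum in which the factor $q^{2n^2}$ produces the level-$5$/level-$10$ structure one expects from the products in \eqref{b1chix}.

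Next I would bring in $\chi(q)$ and $X(q)$. Using the Rogers--Ramanujan identities $1/(q,q^4;q^5)_\infty = \sum_{m\geq 0} q^{m^2}/(q)_m$ and $1/(q^2,q^3;q^5)_\infty = \sum_{m\geq 0} q^{m^2+m}/(q)_m$ together with the defining series of $\chi$ and $X$ (or, equivalently, their own Bailey pairs), I would rewrite $\chi(q)/(q^2,q^3;q^5)_\infty$ and $X(q)/(q,q^4;q^5)_\infty$ as Hecke-type double sums of the same shape as the one obtained for $\mathcal{B}^{(2)}(q)$. At that point all three terms on the left of \eqref{b1chix} are written as double sums, and each of them is genuinely mock (indeed only mixed mock modular); what makes \eqref{b1chix} true is that the weights $1,\,2,\,-2$ are precisely those for which the Appell--Lerch (i.e. non-theta) parts of the three double sums cancel. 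Making this cancellation visible --- splitting each Hecke-type sum along its Appell--Lerch diagonal into a universal Appell--Lerch piece plus an ordinary theta piece, and checking that the universal pieces add to zero --- is the heart of the argument.

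Once the mock parts are gone, what remains is an identity among ordinary theta functions. I would finish by evaluating it with the Jacobi triple product and the quintuple product identity, the latter recombining the level-$5$ products $(q,q^4;q^5)_\infty$ and $(q^2,q^3;q^5)_\infty$ with the level-$10$ thetas produced by the $q^{2n^2}$, and collapsing everything to $-(q)_\infty/(-q)_\infty^2$.

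The main obstacle is this cancellation step: since none of $\mathcal{B}^{(2)}(q)$, $\chi(q)$, $X(q)$ is modular on its own, the identity hinges on a delicate matching of their mock parts, and the real work is to massage the three Hecke-type double sums into a common normal form in which the roles of the constants $2$ and $-2$, and of the specific infinite products, become transparent. By comparison, identifying the Bailey pair for $1/(-q)_n^2$, producing the Hecke-type forms of $\chi$ and $X$, and verifying the residual classical theta identity are routine applications of the machinery of Section 3 and standard theta-function identities. (A purely $q$-hypergeometric route, transforming $\mathcal{B}^{(2)}(q)$ directly by a multiparameter Bailey-type transformation and only then introducing $\chi$ and $X$, is also conceivable, but the mock-part bookkeeping would reappear as the same obstacle in a different guise.)
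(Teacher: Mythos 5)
Your overall strategy --- express $\mathcal{B}^{(2)}(q)$, $\chi(q)$ and $X(q)$ in terms of mock building blocks, show that the mock parts cancel against the coefficients $1$, $2$, $-2$, and then verify a residual identity between ordinary theta functions --- is exactly the architecture of the paper's proof. But the step you yourself call ``the heart of the argument'' is precisely the one you have not supplied, and the concrete route you sketch for it would not work as stated. The role of the products $1/(q^2,q^3;q^5)_\infty$ and $1/(q,q^4;q^5)_\infty$ is not to be expanded via the Rogers--Ramanujan identities into further $q$-series; rather, they are exactly the theta quotients that occur as coefficients of the Appell--Lerch pieces of $\mathcal{B}^{(2)}(q)$. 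The paper takes $k=2$ in the already-established formula \eqref{b1k} and uses \eqref{j1}, \eqref{j2}, \eqref{m1}, \eqref{m1.5} to get
\[
\mathcal{B}^{(2)}(q) = \frac{4}{(q)_\infty}\Bigl(-j(q,q^5)\,m(-q,q^5,q^4) + j(q^2,q^5)\,m(-q^2,q^5,q^3) + \tfrac{(q^5;q^5)_\infty^2}{(-1;q^5)_\infty^2}\Bigr),
\]
and the elementary observation that $j(q,q^5)/(q)_\infty = 1/(q^2,q^3;q^5)_\infty$ and $j(q^2,q^5)/(q)_\infty = 1/(q,q^4;q^5)_\infty$ is what makes the coefficients in \eqref{b1chix} transparent. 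The counterpart inputs for the tenth orders are the specific Hickerson--Mortenson identities $X(q) = 2m(-q^2,q^5,q^4) - J_{3,10}J_{5,10}/J_{1,5}$ and $\chi(q) = 2m(-q,q^5,q^2) + qJ_{1,10}J_{5,10}/J_{2,5}$; these are known results that must be cited or proved, not something that falls out of the defining series together with Rogers--Ramanujan.

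There is also a subtlety your plan glosses over: even once all three terms are written in terms of $m(x,q,z)$ with the same $x$ and $q$, the auxiliary parameters $z$ do not match ($z=q^4$ versus $z=q^2$ for $x=-q$, and $z=q^3$ versus $z=q^4$ for $x=-q^2$), so the mock parts do not cancel on the nose. One needs the change-of-$z$ formula \eqref{m2}, $m(x,q,z_1)=m(x,q,z_0)+\Delta(x,q,z_1,z_0)$, whose correction terms $\Delta$ are further theta quotients that must be carried along into the final modular identity. That final identity is then a genuine, somewhat messy, equality of modular objects; the paper disposes of it by a finite computation with Garvan's MAPLE theta package rather than by the triple and quintuple product identities, though a hand proof along the lines you suggest may well exist. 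In short: right blueprint, but the two load-bearing ingredients --- the explicit $m$-function representations with matched parameters, and the $\Delta$-corrections needed to match them --- are missing from your argument.
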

We close the paper in Section 5 with some suggestions for future study. 

\section{Level $\ell$ Appell functions and indefinite theta series}
We briefly recall Appell functions and indefinite theta functions, each of which is a building block of mixed mock modular forms.   The level $\ell$ Appell function $A_{\ell}(a,b,q)$ is defined by \cite{Zw2}

\begin{equation} \label{Appelldef}
A_{\ell}(a,b,q) := a^{\ell/2}\sum_{n \in \mathbb{Z}} \frac{(-1)^{\ell n}q^{\ell n(n+1)/2}b^n}{1-aq^n}
\end{equation}
and the indefinite theta series $f_{a,b,c}(x,y,z)$ is given by \cite{Hi-Mo1}

\begin{equation} \label{fdef}
f_{a,b,c}(x,y,q) := \left(\sum_{r,s \geq 0} - \sum_{r,s < 0}\right) (-1)^{r+s} x^r y^s q^{a \binom{r}{2} + brs + c \binom{s}{2}}.
\end{equation}

If $f$ is a modular form, the functions $\frac{1}{f} A_{\ell}(a,b,q)$ and $\frac{1}{f} f_{a,b,c}(x,y,q)$ are in general mixed mock modular forms \cite{ac,Hi-Mo1,Zw1,Zw2}.   In very special cases they may be mock modular (or even modular), one such case being the Appell-Lerch series $m(x,q,z)$, where 

\begin{equation} \label{Appell-Lerchdef} 
m(x,q,z) := \frac{1}{j(z,q)} \sum_{r \in \mathbb{Z}} \frac{(-1)^r q^{\binom{r}{2}} z^r}{1-q^{r-1} xz}.
\end{equation}   

\noindent Here $j(x,q):=(q,x,q/x)_{\infty}$.  For future reference we note that from the definition of $j(x, q)$, we have

\begin{equation} \label{j1}
j(q^{n} x, q) = (-1)^{n} q^{-\binom{n}{2}} x^{-n} j(x,q)
\end{equation}

\noindent where $n \in \mathbb{Z}$ and

\begin{equation} \label{j2}
j(x,q) = j(q/x, q) = -x j(x^{-1}, q).
\end{equation}

Hickerson and Mortenson \cite{Hi-Mo1} recently gave an explicit expression for the indefinite theta series \eqref{fdef} in terms of Appell-Lerch series \eqref{Appell-Lerchdef}.  We cite a special case of their result.  Define

\begin{equation} \label{g}
\begin{aligned}
g_{a,b,c}(x, y, q, z_1, z_0) & := \sum_{t=0}^{a-1} (-y)^t q^{c\binom{t}{2}} j(q^{bt} x, q^a) m\left(-q^{a \binom{b+1}{2} - c \binom{a+1}{2} - t(b^2 - ac)} \frac{(-y)^a}{(-x)^b}, q^{a(b^2 - ac)}, z_0) \right)\\
& + \sum_{t=0}^{c-1} (-x)^t q^{a \binom{t}{2}} j(q^{bt} y, q^c) m\left(-q^{c\binom{b+1}{2} - a\binom{c+1}{2} - t(b^2 -ac)}  \frac{(-x)^c}{(-y)^b}, q^{c(b^2 - ac)}, z_1\right)
\end{aligned}
\end{equation}
and
\begin{equation*}
\begin{aligned}
& \theta_{n,p}(x,y,q) := \frac{1}{\overline{J}_{0, np(2n+p)}} \sum_{r^{*} = 0}^{p-1} \sum_{s^{*}=0}^{p-1} q^{n\binom{r-(n-1)/2}{2} + (n+p)(r - (n-1)/2)(s+ (n+1)/2) + n \binom{s + (n+1)/2}{2}} \\
& \times \frac{(-x)^{r - (n-1)/2} (-y)^{s + (n+1)/2} J_{p^{2} (2n+p)}^{3} j(-q^{np(s-r)} x^{n} / y^{n}, q^{np^2}) j(q^{p(2n+p)(r+s) + p(n+p)} x^{p} y^{p}, q^{p^2 (2n + p)})}{j(q^{p(2n+p)r + p(n+p)/2} (-y)^{n+p} / (-x)^{n}, q^{p^{2} (2n+p)}) j(q^{p(2n+p)s + p(n+p)/2} (-x)^{n+p} / (-y)^{n}, q^{p^{2} (2n+p)})},
\end{aligned}
\end{equation*}

\noindent where $r := r^{*} + \{(n-1)/2 \}$ and $s:= s^{*} + \{ (n-1)/2 \}$ with $0 \leq \{ \alpha \} < 1$ denoting the fractional part of $\alpha$.  Also, $J_{m}:= J_{m, 3m}$ with $J_{a,m} := j(q^{a}, q^{m})$, and $\overline{J}_{a,m}:=j(-q^{a}, q^{m})$. Following \cite{Hi-Mo1}, we use the term ``generic" to mean that the parameters do not cause poles in the Appell-Lerch series or in the quotients of theta functions.   Let $n$ and $p$ be positive integers with $(n$, $p)=1$. For generic $x$, $y \in \mathbb{C}^{*}$, we have (see Theorem 0.3 in \cite{Hi-Mo1})

\begin{equation} \label{hm}
f_{n, n+p, n}(x, y, q) = g_{n, n+p, n}(x, y, q, -1, -1) + \theta_{n,p}(x,y,q).
\end{equation}
Note that since $m(x,q,z)$ is generally mock modular and $j(x,q)$ is modular, we explicitly see that such indefinite theta series are in general mixed mock modular forms.

\section{Some sources of $q$-hypergeometric mixed mock modular forms}
\subsection{$q$-series transformations}
We begin with a simple observation.  The typical two-term $q$-series transformation takes the form $F=fG$, where $F$ and $G$ are series and $f$ is an infinite product which, under standard specializations, is a modular form.  Thus any mock theta function to which such a transformation (non-trivially) applies immediately corresponds to a mixed mock modular form.  We give some examples.

First, consider a $_3\phi_2$ transformation \cite[Appendix III, Equation (III.10)]{Ga-Ra1}

\begin{equation} \label{3phi2}
\sum_{n\geq 0}\frac{\left(aq/bc, d, e\right)_n}{\left(q, aq/b, aq/c\right)_n}\left(\frac{aq}{de}\right)^n
=\frac{\left(aq/d, aq/e, aq/bc\right)_\infty}{\left(aq/b, aq/c, aq/de\right)_\infty}
\sum_{n\geq 0}\frac{\left(aq/de, b, c\right)_n}{\left(q, aq/d, aq/e\right)_n}\left(\frac{aq}{bc}\right)^n.
\end{equation}      

\noindent The series on the right-hand side of \eqref{3phi2} specializes in many different ways to give mock theta functions \cite{Al-Br-Lo1, Br-Lo1, Br-On1, Br-On-Rh1,Ka1}. In all of these cases the left-hand side of \eqref{3phi2} is then a mixed mock modular form.   To give an example, we let $a=1$, $d=1/e = x$ and $b,c \to \infty$ in \eqref{3phi2} to obtain

\begin{equation} \label{S}    
\mathcal{S}(x,q) := \sum_{n \geq 0} \frac{(x,1/x)_nq^n}{(q)_n} =  \frac{(xq,q/x)_{\infty}}{(q)_{\infty}}\sum_{n \geq 0} \frac{q^{n^2}}{(xq,q/x)_n}.
\end{equation}

\noindent Now for any root of unity $x \neq 1$ the series on the right-hand side of \eqref{S} is a mock theta function \cite{Br-On1}, so for such $x$ the series $\mathcal{S}(x,q)$ will in general be a mixed mock modular form.  Of note is the case 

\begin{equation} \label{Sspec}
\mathcal{S}(-e^{2\pi i/3},q) = 1 + \sum_{n \geq 1} \frac{(-q^3;q^3)_{n-1}q^n}{(-q)_{n-1}(q)_n} = \frac{(-q^3;q^3)_{\infty}}{(q^2;q^2)_{\infty}} \chi(q)
\end{equation}

\noindent where 

\begin{equation*}
\chi(q) := \sum_{n \geq 0} \frac{q^{n^2}(-q)_n}{(-q^3;q^3)_n}  
\end{equation*}
is a third order mock theta function \cite{Wat1} (not to be confused with the tenth order $\chi(q)$).   The mixed mock modular form in \eqref{Sspec} is the generating function for partitions without sequences \cite{andrewsshort,bma}. 

For another application of \eqref{3phi2}, let $a=q$, $d=y$, $e=q/y$, and $b,c \to \infty$ to get
\begin{equation} \label{Gleissberg}
(q)_{\infty}\sum_{n \geq 0} \frac{(y,q/y)_nq^n}{(q)_n} = \frac{(y,q/y)_{\infty}}{(q)_{\infty}}\sum_{n \geq 0} \frac{q^{n^2+n}}{(y,q/y)_{n+1}}. 
\end{equation}
It was explained in \cite{Br-Ma1} how the case $q=q^d$ and $y=-q^r$ of the left-hand side of \eqref{Gleissberg} is the generating function for certain partitions related to Gleissberg's extension of Schur's theorem \cite{Gl1}.  The sum on the right-hand side of \eqref{Gleissberg} is a so-called \emph{universal} mock theta function \cite{Go-Mc1,Hi1,Ka1}.   Thus the generating function for the Gleissberg-type partitions is a mixed mock modular form.  (See \cite{Br-Ma1} for more on this.)

Next consider two transformations due to Bailey (see \cite{AnlostIV} or \cite{bailey}),

\begin{equation} \label{Baileytrans}
\begin{aligned}
\sum_{n \geq 0} \frac{(\rho_1,\rho_2)_n(aq/f;q^2)_n \left(\frac{aq}{\rho_1\rho_2}\right)^n}{(q,aq/f)_n(aq;q^2)_n} &= \\ \frac{(aq/\rho_1,aq/\rho_2)_{\infty}}{(aq,aq/\rho_1\rho_2)_{\infty}}&\sum_{n \geq 0}\frac{(1-aq^{4n})(a,f;q^2)_n(\rho_1,\rho_2)_{2n}\left(\frac{a^3}{\rho_1^2\rho_2^2f}\right)^nq^{2n^2+2n}}{(1-a)(q^2,aq^2/f;q^2)_n(aq/\rho_1,aq/\rho_2)_{2n}}
\end{aligned}
\end{equation}
and
\begin{equation} \label{Baileytransbis}
\begin{aligned}
\sum_{n \geq 0} \frac{(r_1,r_2;q^2)_n(-aq/b)_{2n}\left(\frac{a^2q^2}{r_1r_2}\right)^n}{(q^2,a^2q^2/b^2;q^2)_n(-aq)_{2n}} &= \\ \frac{(a^2q^2/r_1,a^2q^2/r_2;q^2)_{\infty}}{(a^2q^2,a^2q^2/r_1r_2;q^2)_{\infty}}&\sum_{n \geq 0}\frac{(1-aq^{2n})(a,b)_n(r_1,r_2;q^2)_n\left(\frac{a^3}{br_1r_2}\right)^nq^{n^2+2n}}{(1-a)(q,aq/b)_n(a^2q^2/r_1,a^2q^2/r_2;q^2)_n}.
\end{aligned}
\end{equation}
\noindent There are several ways to turn the series on the right-hand side of \eqref{Baileytrans} or \eqref{Baileytransbis} into an Appell function \eqref{Appelldef}.  For example, setting $a=1$, $\rho_1=\rho_2 = -1$ and letting $f \to \infty$ in \eqref{Baileytrans} gives 

\begin{equation} \label{T1}
\mathcal{T}_1(q) := \sum_{n \geq 0} \frac{(-1)_n^2q^n}{(q)_n(q;q^2)_n} = \frac{2(-q)_{\infty}^2}{(q)_{\infty}^2}\sum_{n \in \mathbb{Z}} \frac{(-1)^nq^{3n^2+n}}{1+q^{2n}}.
\end{equation}

\noindent Comparing \eqref{T1} with Watson's expression for Ramanujan's third order mock theta function $f(q)$ \cite{Wat1},

\begin{equation} \label{fofq}
f(q) := \sum_{n \geq 0} \frac{q^{n^2}}{(-q)_n^2} = \frac{2}{(q)_{\infty}}\sum_{n \in \mathbb{Z}} \frac{(-1)^nq^{n(3n+1)/2}}{1+q^n},
\end{equation}

\noindent we have that $\mathcal{T}_1(q) = (-q)_{\infty}^3f(q^2)/(q)_{\infty}$ is a mixed mock modular form.   

For another example, set $a=-b=-r_1=1$ in \eqref{Baileytransbis} and let $r_2 \to \infty$ to obtain
\begin{equation} \label{T2}
\mathcal{T}_2(q) := \sum_{n \geq 0} \frac{(-1;q^2)_n(q;q^2)_nq^{n^2+n}}{(q^2;q^2)_n(-q)_{2n}} = \frac{2(-q^2;q^2)_{\infty}}{(q^2;q^2)_{\infty}}\sum_{n \in \mathbb{Z}} \frac{(-1)^nq^{2n^2+n}}{1+q^{2n}}.
\end{equation}
Comparing \eqref{T2} with an expression for the second order mock theta function $\mu(q)$ \cite[Entry (12.2.1)]{An-Be1},
\begin{equation*}
\mu(q) := \sum_{n \geq 0} \frac{(q;q^2)_n(-1)^nq^{n^2}}{(-q^2;q^2)_n^2} = \frac{2(q;q^2)_{\infty}}{(q^2;q^2)_{\infty}} \sum_{n \in \mathbb{Z}} \frac{q^{2n^2+n}}{1+q^{2n}}, 
\end{equation*}
we have that $\mathcal{T}_2(q) = (-q^2;q^2)_{\infty}\mu(-q)/(-q;q^2)_{\infty}$ is a mixed mock modular form.
 
It is interesting to compare what happens with Bailey's transformation to what happens with a limiting case of a transformation of Watson \cite{Ga-Ra1},

\begin{equation} \label{Watson-Whipple}
\sum_{n \geq 0} \frac{(aq/bc,d,e)_n\left(\frac{aq}{de}\right)^n}{(q,aq/b,aq/c)_n} = \frac{(aq/d,aq/e)_{\infty}}{(aq,aq/de)_{\infty}}\sum_{n \geq 0} \frac{(a)_n(1-aq^{2n})(b,c,d,e)_n(-1)^nq^{n \choose 2}(aq)^{2n}}{(q)_n(1-a)(aq/b,aq/c,aq/d,aq/e)_n (bcde)^n}.
\end{equation} 

\noindent Here there are also many ways to turn the series on the right-hand side of \eqref{Watson-Whipple} into an Appell function, but in these cases one typically obtains a genuine mock theta function.   (See \cite{Al-Br-Lo1,Br-Lo1,Br-On1,Br-On-Rh1,Ka1,Wa1}, for example.)     
 
Multi-term transformations may also be useful.  For example, consider the following three-term transformation from Ramanujan's lost notebook \cite[Entry $(3.4.7)$]{An-Be1},

\begin{equation} \label{3t}
\sum_{n \geq 1} (-a)_n(-b)_nq^n = \frac{(-a)_{\infty}}{(q,-q/b)_{\infty}} \sum_{n \in \mathbb{Z}} \frac{b^{n}q^{n+1 \choose 2}}{1+aq^n} - \sum_{n \geq 0} \frac{(ab)^{-n}q^{n^2}}{(-q/a,-q/b)_n}.
\end{equation}

\noindent Replacing $a$ by $-x$ and $b$ by $-1/x$ in \eqref{3t} we have

\begin{equation} \label{3tspec}
\mathcal{U}(x,q) := \sum_{n \geq 0} (x,1/x)_nq^n = \frac{(1-x)}{(q)_{\infty}} \sum_{n \in \mathbb{Z}} \frac{(-x)^{-n}q^{n+1 \choose 2}}{1-xq^n} - \sum_{n \geq 0} \frac{q^{n^2}}{(xq,q/x)_n}.
\end{equation}

\noindent The first term on the right-hand side of \eqref{3tspec} is, up to multiplication by an infinite product, an instance of \eqref{Appell-Lerchdef}.  As mentioned above, the second term in \eqref{3tspec} is a mock theta function when $x \neq 1$ is a root of unity.  Thus for such $x$ the series $\mathcal{U}(x,q)$ will in general be a mixed mock modular form.\footnote{When $x=\sqrt{-1}$ it is in fact a genuine mock theta function.  See \cite{Br-On-Pi-Rh1}.}   Of note is the case $x=-1$, which is the generating function for strongly unimodal sequences \cite{rhoades}.

\subsection{Bailey pairs and the Bailey chain}
A \emph{Bailey pair} relative to $a$ is a pair of sequences $(\alpha_n,\beta_n)_{n \geq 0}$ satisfying

\begin{equation} \label{pairdef}
\beta_n = \sum_{k=0}^n \frac{\alpha_k}{(q)_{n-k}(aq)_{n+k}}. 
\end{equation} 

\noindent The Bailey lemma says that if $(\alpha_n,\beta_n)$ is such a sequence, then so is $(\alpha'_n,\beta'_n)$, where

\begin{equation} \label{alphaprimedef}
\alpha'_n = \frac{(\rho_1)_n(\rho_2)_n(aq/\rho_1\rho_2)^n}{(aq/\rho_1)_n(aq/\rho_2)_n}\alpha_n
\end{equation} 

\noindent and

\begin{equation} \label{betaprimedef}
\beta'_n = \sum_{k=0}^n\frac{(\rho_1)_k(\rho_2)_k(aq/\rho_1\rho_2)_{n-k} (aq/\rho_1\rho_2)^k}{(aq/\rho_1)_n(aq/\rho_2)_n(q)_{n-k}} \beta_k.
\end{equation}

\noindent Iterating \eqref{alphaprimedef} and \eqref{betaprimedef} leads to a sequence of Bailey pairs, called the Bailey chain.  Putting \eqref{alphaprimedef} and \eqref{betaprimedef} in \eqref{pairdef} and letting $n \to \infty$ gives 

\begin{equation} \label{limitBailey}
\sum_{n \geq 0} (\rho_1)_n (\rho_2)_n (aq/\rho_1 \rho_2)^n \beta_n = \frac{(aq/\rho_1)_{\infty}(aq/\rho_2)_{\infty}}{(aq)_{\infty}(aq/\rho_1 \rho_2)_{\infty}} \sum_{n \geq 0} \frac{(\rho_1)_n (\rho_2)_n(aq/\rho_1 \rho_2)^n }{(aq/ \rho_1)_n(aq/ \rho_2)_n}\alpha_n.
\end{equation} 

\noindent For more on Bailey pairs, see \cite{AnCBMS,AnPhoenix,Wa1}.

By now, most (if not all) classical $q$-hypergeometric mock theta functions have been expressed in the literature in terms of indefinite theta series.  This is in large part thanks to work of Andrews and Hickerson \cite{andrews57,An-Hi1} on Bailey pairs wherein $\alpha_n$ contains an indefinite quadratic form.  To see how this works, let us treat Ramanujan's fifth order mock theta function,
\begin{equation*}
f_1(q) := \sum_{n \geq 0} \frac{q^{n^2+n}}{(-q)_n}.
\end{equation*}
First, Andrews \cite{andrews57} proved that $(\alpha_n,\beta_n)$ is a Bailey pair relative to $q$, where

\begin{equation} \label{5thalpha}
\alpha_n = \frac{(1-q^{2n+1})q^{n(3n+1)/2}}{1-q}\sum_{|j| \leq n} (-1)^jq^{-j^2}
\end{equation}

\noindent and

\begin{equation} \label{5thbeta}
\beta_n = \frac{1}{(-q)_n}.
\end{equation}

\noindent Next, inserting \eqref{5thalpha} and \eqref{5thbeta} into \eqref{limitBailey} with $\rho_1$, $\rho_2 \to \infty$ gives

\begin{equation*}
f_{1}(q):=\sum_{n \geq 0} \frac{q^{n^2+n}}{(-q)_n} = \frac{1}{(q)_{\infty}}\sum_{n \geq 0}\sum_{|j| \leq n} (-1)^jq^{n(5n+3)/2-j^2}(1-q^{2n+1}).
\end{equation*} 

\noindent We then have

\begin{equation} \label{f1tof}
\begin{aligned}
f_{1}(q) &=
\frac{1}{(q)_{\infty}}\left(\sum_{n \geq 0}\sum_{|j| \leq n} (-1)^jq^{n(5n+3)/2-j^2} - \sum_{n \geq 0}\sum_{|j| \leq n} (-1)^jq^{n(5n+7)/2+1-j^2}\right) \\
&= \frac{1}{(q)_{\infty}}\left(\sum_{n \geq 0}\sum_{|j| \leq n} (-1)^jq^{n(5n+3)/2-j^2} - \sum_{n < 0}\sum_{|j| \leq -n-1} (-1)^jq^{n(5n+3)/2-j^2}\right) \\
&= \frac{1}{(q)_{\infty}} \left(\left(\sum_{\substack{r, s \geq 0 \\ r \equiv s \imod{2}}} - \sum_{\substack{r, s < 0 \\ r \equiv s \imod{2} }} \right) (-1)^{\frac{r-s}{2}}q^{\frac{3}{8}r^2 + \frac{7}{4}rs+ \frac{3}{8}s^2 + \frac{3}{4}r+\frac{3}{4}s} \right) \\
&= \frac{1}{(q)_{\infty}} \left(\left(\sum_{r,s \geq 0}  - \sum_{r,s < 0}\right) (-1)^{r+s}q^{\frac{3}{2}r^2+7rs+\frac{3}{2}s^2+\frac{3}{2}r+\frac{3}{2}s} \right.  \\ &\hskip1in + \left. \left(\sum_{r,s \geq 0}  - \sum_{r,s < 0}\right) (-1)^{r+s}q^{\frac{3}{2}r^2+7rs+\frac{3}{2}s^2+\frac{13}{2}r+\frac{13}{2}s+4}\right) \\
&= \frac{1}{(q)_{\infty}} \Big(f_{3,7,3}(q^3,q^3,q) + q^4f_{3,7,3}(q^{8},q^{8},q)\Big).
\end{aligned}
\end{equation} 

\noindent In the above, we first replaced $n$ by $-n-1$ in the second sum, then set $n = (r+s)/2$ and $j=(r-s)/2$, then replaced $(r,s)$ by $(2r,2s)$ or $(2r+1,2s+1)$, and finally applied the definition in \eqref{fdef}.   This is a standard calculation \cite{Hi1}.  

We may now apply \eqref{hm} to write the last line in \eqref{f1tof} in terms of the Appell-Lerch series \eqref{Appell-Lerchdef}.  It is not necessary to write out the exact expression, only to note that using (\ref{j1})--(\ref{g}), each of the modular forms $j(x,q)$ occurring in $g_{3,7,3}(q^3, q^3, q, -1, -1)$ and $g_{3,7,3}(q^8, q^8, q, -1, -1)$ is, up to a power of $q$, either  $j(q^3,q^3) = 0$ or $j(q, q^3)=(q)_{\infty}$. The former obviously contributes nothing and the latter will cancel with the $1/(q)_{\infty}$ in \eqref{f1tof}.  Thus we have expressed $f_1(q)$, up to the addition of a modular form, as a sum of Appell-Lerch series.  This is a genuine mock theta function.  

Everything seems to have worked out perfectly.   However, if the coefficient of $n^2$ in the exponent of $q$ in \eqref{5thalpha} were \emph{not} equal to $3/2$, the result of the above calculation would \emph{not} be a mock theta function.  Indeed, we would have an indefinite theta function $f_{n,n+p,n}(x,y,q)$ with $n \neq 3$ and the modular forms $j(x,q)$ occurring in \eqref{hm} would not cancel with the $1/(q)_{\infty}$ in \eqref{f1tof}.   We would then have a mixed mock modular form.   

There are two points to make here.  First, one should not expect a given Bailey pair related to indefinite quadratic forms to yield mock theta functions, but mixed mock modular forms.   Second, simple iterations along the Bailey chain using \eqref{alphaprimedef} and \eqref{betaprimedef} naturally produce infinite families of mixed mock modular forms, but not more mock theta functions.      

To illustrate the first point, consider the Bailey pair relative to $q$,

\begin{equation} \label{Andalpha}
\alpha_n = \frac{q^{n^2}(1-q^{2n+1})}{1-q}\sum_{|2j| \leq n} (-1)^jq^{-j(3j+1)}
\end{equation}

\noindent and

\begin{equation} \label{Andbeta}
\beta_n = \frac{q^{n \choose 2}}{(q)_n(q;q^2)_n}.
\end{equation}

\noindent This pair was established by Andrews as part of his study of partitions with early conditions \cite[Eq. (4.18)]{andrewsearly}. He was interested in the generating function for $J_1(n)$, the number of partitions of $n$ such that all odd integers smaller than the largest even part appear at least twice, even parts appear without gaps and odd parts larger than the largest even part are distinct.  He showed that

\begin{equation} \label{J}
\begin{aligned}
\sum_{n \geq 0} J_1(n)q^n &= (-q;q^2)_{\infty}\sum_{n \geq 0} \frac{q^{3n^2+n}}{(q^2;q^2)_n(q^2;q^4)_n}\\
&=\frac{(-q;q^2)_{\infty}}{(q^2;q^2)_{\infty}} \sum_{n \geq 0} \sum_{|2j| \leq n} (-1)^j q^{4n^2 + 2n - j(6j+2)} (1 - q^{4n+2}),
\end{aligned}
\end{equation}

\noindent where the last line of \eqref{J} follows upon inserting \eqref{Andalpha} and \eqref{Andbeta} in \eqref{limitBailey} with $q=q^2$ and $\rho_1,\rho_2 \to \infty$.  Calculating as in \eqref{f1tof} above one finds that

\begin{equation}\label{gftof}
\begin{aligned}
\sum_{n \geq 0} J_1(n)q^n = \frac{(-q;q^2)_{\infty}}{(q^2;q^2)_{\infty}}\Big(f_{5,11,5}&(q^{12},q^{16},q^4) + q^{20}f_{5,11,5}(q^{44},q^{48},q^4)  \\  &+  q^{6}f_{5,11,5}(q^{28},q^{32},q^4) + q^{42}f_{5,11,5}(q^{60},q^{64},q^4)\Big).
\end{aligned}
\end{equation}      

\noindent Using \eqref{hm}, the right-hand side of \eqref{gftof} may be expressed in terms of modular forms and Appell-Lerch series (mock theta functions).  In this expression, the modular forms $j(x,q)$ which occur as coefficients of the Appell-Lerch series do not cancel with $(-q;q^2)_{\infty}/(q^2;q^2)_{\infty}$, and thus the generating function for $J_{1}(n)$ is in fact a mixed mock modular form.   A similar calculation occurs with Andrews' generating function for augmented tubular partitions \cite[Eq. (1.13)]{Antube}.

The second point was discussed in some detail in \cite[Section 3]{lo1}.  For example, the multisum $\mathcal{B}^{(k)}(q)$ (see \eqref{Bk}) satisfies

\begin{eqnarray} 
\mathcal{B}^{(k)}(q) &=& \frac{2}{(q)_{\infty}} \sum_{n \in \mathbb{Z}} \frac{q^{kn^2+\binom{n+1}{2}}(-1)^n}{1+q^n} 
= \frac{2i(-1)^{k}}{(q)_{\infty}} A_{2k+1}(-1, q^{-k}, q) \nonumber \\
&=& \frac{2}{(q)_{\infty}} \Biggl((-1)^k\frac{(q^{2k+1}; q^{2k+1})_{\infty}^2}{2(-q^{2k+1}; q^{2k+1})_{\infty}^2}  \label{b1k} \\ && \hskip1in + \sum_{\substack{i=1 \\ i \neq k+1}}^{2k+1} (-1)^{i+1} j(q^{k+i}, q^{2k+1}) m(-q^{k-i+1}, q^{2k+1}, q^{k+i})\Biggr). \nonumber
\end{eqnarray} 
When $k=1$ this is the mock theta function $f(q)$ but when $k \geq 2$ we have a mixed mock modular form.

For another example, we have (see (7.15), (7.20) and (7.21) in \cite{andrews57})
\begin{equation*}
\begin{aligned}
\mathcal{M}^{(k)}(q) & := \sum_{n_k \geq n_{k-1} \geq \cdots \geq n_1 \geq  0} \frac{(-q)_{n_k} q^{\binom{n_k + 1}{2} + n_{k-1}^2 + n_{k-1} + \cdots + n_1^2 + n_1}}{(q)_{n_k - n_{k-1}} \cdots (q)_{n_2 - n_1} (q^{n_1 + 1})_{n_1 + 1}} \\
& = \frac{(-q)_{\infty}}{(q)_{\infty}} \Bigl( \sum_{r, s \geq 0} - \sum_{r, s < 0} \Bigr) (-1)^{r+s}q^{kr^2 + kr + (2k+1)rs + ks^2 + ks} \\
& =  \frac{(-q)_{\infty}}{(q)_{\infty}} f_{2k, 2k+1, 2k}(q^{2k}, q^{2k}, q). 
\end{aligned}
\end{equation*}
When $k=1$ this is the tenth order mock theta function
\begin{equation*} \label{phi}
\phi(q) := \sum_{n \geq 0} \frac{q^{\binom{n+1}{2}}}{(q;q^2)_{n+1}},
\end{equation*}  
but when $k \geq 2$ we have a mixed mock modular form.

\subsection{Bailey pairs and residual partial theta identities}
Inspired by work of Andrews and Warnaar \cite{andpart,An-Wa1,Wa2}, the first author recently showed how partial theta identities often imply Bailey-type lemmas with a built-in quadratic form \cite{lovepart}.  For example, if $(\alpha_n, \beta_n)$ is a Bailey pair relative to $a$, then \cite[Eq. (1.24)]{lovepart}
\begin{equation} \label{BaileyFine}
\sum_{n \geq 0} (aq)_{2n}q^n\beta_n = \frac{1}{(q)_{\infty}}\sum_{r,n \geq 0} (-a)^nq^{3n(n+1)/2+(2n+1)r}\alpha_r
\end{equation}
and
\cite[Eq. (1.5)]{lovepart}
\begin{equation} \label{otherone}
\sum_{n \geq 0} q^n\beta_n = \frac{1}{(q,aq)_{\infty}}\sum_{r,n \geq 0} (-a)^nq^{n(n+1)/2+(2n+1)r}\alpha_r.
\end{equation}
There are around twenty such results in \cite{lovepart}, and they may be used to produce many mixed mock modular forms.  The example given in \cite{lovepart} uses the Bailey pair relative to $1$ \cite[Lemma 3.3]{An2},

\begin{equation} \label{alphares}
\alpha_n = 
\begin{cases}
(-1)^n\left(z^nq^{n \choose 2} + z^{-n}q^{n+1 \choose 2}\right),& \text{$n > 0$}, \\
1,& \text{$n=0$},
\end{cases}
\end{equation}
and
\begin{equation} \label{betares}
\beta_n = \frac{(z)_n(q/z)_n}{(q)_{2n}}.
\end{equation}
Substituting \eqref{alphares} and \eqref{betares} into \eqref{BaileyFine} and simplifying gives 
$$
\begin{aligned}
\mathcal{V}(z,q) &:=  \sum_{n \geq 0} q^n(z,q/z)_n \\ &= \frac{1}{(q)_{\infty}} \left( \sum_{n,r \geq 0} - \sum_{n,r < 0} \right) (-1)^{n+r}z^rq^{3n(n+1)/2 + r(r+1)/2 + 2nr} \\ &= \frac{1}{(q)_{\infty}} f_{3,2,1}(q^3,zq,q).
\end{aligned}
$$ 
Here \eqref{hm} does not apply, but a more general result in \cite{Hi-Mo1} does, and ensures that the above is mixed mock modular.  Of note is the case $\mathcal{V}(q,q^2)/(q;q^2)_{\infty}^2$,
$$
\frac{1}{(q;q^2)_{\infty}^2}\sum_{n \geq 0}(q;q^2)_n^2q^{2n} = \frac{1}{(q)_{\infty}(q;q^2)_{\infty}}f_{3,2,1}(q^6,q^3,q^2).
$$
The left-hand side was studied by Hikami \cite{Hik1} and dubbed a ``2nd order mock theta function", but here we see that it is really a mixed mock object. 

If we instead substitute \eqref{alphares} and \eqref{betares} into \eqref{otherone}, we obtain
\begin{equation*}
\begin{aligned}
\mathcal{W}(z,q) & := 
\sum_{n \geq 0} \frac{(z,q/z)_nq^n}{(q)_{2n}}  \\ & = \frac{1}{(q)_{\infty}^2}\left( \sum_{n,r \geq 0} - \sum_{n,r < 0} \right) (-1)^{n+r}q^{\binom{n+1}{2} + 2n + \binom{r+1}{2}}z^r \\
& = \frac{1}{(q)_{\infty}^2} f_{1,2,1}(q,zq,q), 
\end{aligned}
\end{equation*}
which is in general mixed mock modular. 
 
For a final example, consider the Bailey pair relative to $1$ \cite[L(6)]{slater},
\begin{equation} \label{Slateralpha}
\alpha_n = 
\begin{cases}
0, & \text{if $n$ is odd}, \\
1, & \text{if $n=0$}, \\
(-1)^rq^{3r^2-r}(1+q^{2r}), & \text{if $n=2r > 0$}
\end{cases}
\end{equation}
and
\begin{equation} \label{Slaterbeta}
\beta_n = \frac{1}{(q;q^2)_n(q)_n}.
\end{equation}
Substituting \eqref{Slateralpha} and \eqref{Slaterbeta} in \eqref{otherone}, we find
\begin{equation}
\begin{aligned}
\mathcal{Y}(q) &:= \sum_{n \geq 0} \frac{q^n}{(q;q^2)_n(q)_n} \\ &= \frac{1}{(q)_{\infty}^2}\left( \sum_{n,r \geq 0} - \sum_{n,r < 0} \right) (-1)^{n+r}q^{n(n+1)/2 + 4nr +3r^2+r}\\ &= \frac{1}{(q)_{\infty}^2}f_{1,4,6}(q,q^4,q).
\end{aligned}
\end{equation}

Many more examples coming from the partial theta functions in \cite{lovepart} can be found in \cite[Section 4]{mortpart}.

\section{Proof of Theorem \ref{main}}

We begin with a preliminary result from \cite{Hi-Mo1}.  Suppose $x$, $z \in \mathbb{C}^{*}:=\mathbb{C} \setminus \{ 0 \}$ with neither $z$ nor $xz$ an integral power of $q$. Two relevant properties of the sums $m(x,q,z)$ are as follows (see Proposition 2.1 and Theorem 2.3 in \cite{Hi-Mo1}).

\begin{prop} For generic $x$, $z$, $z_0$ and $z_1 \in \mathbb{C}^{*}$,

\begin{equation} \label{m1}
m(x,q,z) = x^{-1} m(x^{-1}, q, z^{-1}),
\end{equation}

\begin{equation} \label{m1.5}
m(x,q,z) = m(x, q, qz)
\end{equation}

\noindent and

\begin{equation} \label{m2}
m(x, q, z_1) = m(x, q, z_0) + \Delta(x, q, z_1, z_0),
\end{equation}

\noindent where 

\begin{equation*} \label{Deltadef}
 \Delta(x, q, z_1, z_0) := \frac{z_0 J_1^3 j(z_{1} / z_{0}, q) j(xz_{0} z_{1}, q)}{j(z_{0}, q) j(z_{1}, q) j(xz_{0}, q) j(xz_{1}, q)}.
\end{equation*}

\end{prop}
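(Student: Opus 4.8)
The plan is to handle the three identities in increasing order of difficulty, deriving \eqref{m1} and \eqref{m1.5} directly from the defining series \eqref{Appell-Lerchdef} and then establishing \eqref{m2} by an elliptic-function argument in the variable $z_1$.

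For \eqref{m1.5} I would expand $m(x,q,qz)$ from \eqref{Appell-Lerchdef}, absorb the extra factor $q^{r}$ using $q^{\binom{r}{2}}q^{r}=q^{\binom{r+1}{2}}$, and shift the summation index $r\mapsto r-1$; the replacement $j(qz,q)=-z^{-1}j(z,q)$ (the case $n=1$ of \eqref{j1}, since $\binom{1}{2}=0$) then restores exactly $m(x,q,z)$. For \eqref{m1} I would start from $x^{-1}m(x^{-1},q,z^{-1})$, clear the denominator $1-q^{r-1}x^{-1}z^{-1}$ by multiplying numerator and denominator by $-q^{1-r}xz$, use $\binom{r}{2}+1-r=\binom{r-1}{2}$ to repackage the power of $q$, and reindex (first $r\mapsto 1-r$, then a further shift) until the summand coincides with that of $m(x,q,z)$; the leftover scalar prefactor collapses to $1/j(z,q)$ after applying $j(z^{-1},q)=-z^{-1}j(z,q)$ from \eqref{j2}. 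Both identities are purely formal once the index bookkeeping is carried out.

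The substantive identity is \eqref{m2}. Fix $x$, $z_0$ and $q$ and regard both $h(z_1):=m(x,q,z_1)-m(x,q,z_0)$ and $\Delta(x,q,z_1,z_0)$ as functions of $z_1$ on $\mathbb{C}^{*}/q^{\mathbb{Z}}$. By \eqref{m1.5} we have $h(qz_1)=h(z_1)$, and a short check with \eqref{j1} shows $\Delta$ is likewise invariant under $z_1\mapsto qz_1$, so both sides are elliptic in $z_1$. Reading off zeros and poles from \eqref{Appell-Lerchdef} and from the definition of $\Delta(x,q,z_1,z_0)$, each function has, generically, simple poles only at $z_1\equiv 1$ (from the factor $1/j(z_1,q)$) and at $z_1\equiv 1/x$ (from the $r=1$ term of the defining sum, respectively the denominator factor $j(xz_1,q)$). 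Hence $h-\Delta$ is a priori an elliptic function with at worst these two poles, and the strategy is to show both principal parts cancel.

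I would then match the two residues, using $j(w,q)\sim J_1^{3}(1-w)$ as $w\to 1$. At $z_1\equiv 1/x$ the computation is immediate: both $h$ and $\Delta$ have residue $1/(x^{2}j(1/x,q))$ there, as one sees after simplifying with \eqref{j2}. For the pole at $z_1\equiv 1$ I would avoid evaluating the resulting numerical series directly and instead invoke that for any elliptic $f$ the residues of $f(z_1)\,dz_1/z_1$ sum to zero over a fundamental domain; since these are the only poles, this pins down the residue of $h$ at $z_1\equiv 1$ from its residue at $z_1\equiv 1/x$ and yields $1/j(x,q)$, while a parallel computation gives the residue of $\Delta$ at $z_1\equiv 1$ as $1/j(x,q)$ as well, again via \eqref{j2}. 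With both principal parts matched, $h-\Delta$ is a holomorphic elliptic function, hence constant; evaluating at $z_1=z_0$ finishes the proof, since $h(z_0)=0$ trivially while $\Delta(x,q,z_0,z_0)=0$ because $j(z_1/z_0,q)\to j(1,q)=0$. The main obstacle is the pole-matching at $z_1\equiv 1$: one must verify that the two claimed orbits are the only singularities (this is where the genericity hypothesis enters, keeping $z_1\equiv 1$ and $z_1\equiv 1/x$ distinct and the poles simple) and correctly normalize the ``sum of residues vanishes'' statement in multiplicative notation, where the residues are weighted by $1/z_1$. Once the ellipticity and the precise pole data are under control, the identification is forced and the remaining steps are routine applications of \eqref{j1} and \eqref{j2}.
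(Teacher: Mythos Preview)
Your proposal is sound, but there is no proof in the paper to compare it against: the proposition is quoted verbatim from Hickerson--Mortenson, with the paper simply citing ``Proposition 2.1 and Theorem 2.3 in \cite{Hi-Mo1}'' and moving on. So any self-contained argument you supply necessarily goes beyond what the paper does.

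That said, your outline is correct. The manipulations for \eqref{m1} and \eqref{m1.5} are routine reindexings exactly as you describe, and they go through cleanly once one uses $j(qz,q)=-z^{-1}j(z,q)$ and $j(z^{-1},q)=-z^{-1}j(z,q)$. For \eqref{m2}, your elliptic-function strategy is the natural one: $m(x,q,z_1)$ is meromorphic on $\mathbb{C}^{*}$ and $q$-periodic by \eqref{m1.5}, so it descends to an elliptic function on $\mathbb{C}^{*}/q^{\mathbb{Z}}$ with simple poles only on the orbits $z_1\equiv 1$ and $z_1\equiv 1/x$; the same is readily checked for $\Delta$. Your residue computation at $z_1=1/x$ gives $1/(x^{2}j(1/x,q))$ with respect to $dz_1$, and your use of the residue theorem with the invariant form $dz_1/z_1$ to deduce the residue at $z_1\equiv 1$ (rather than summing the Appell-type series there directly) is both legitimate and efficient. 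The final evaluation at $z_1=z_0$ kills the constant.

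One small comment: when you invoke ``sum of residues vanishes,'' note that you are applying it to $h$ and to $\Delta$ separately, each of which is already elliptic; you do not need it for $h-\Delta$. Since you have matched the residues of $h$ and $\Delta$ at $z_1\equiv 1/x$ directly, the residue theorem then forces a match at $z_1\equiv 1$ as well, and the direct verification you sketch for $\Delta$ at $z_1\equiv 1$ is a nice consistency check but not logically required. Also be explicit that genericity of $x$ keeps the two pole orbits distinct and that genericity of $z_0$ prevents the numerator zeros of $\Delta$ (at $z_1\equiv z_0$ and $z_1\equiv 1/(xz_0)$) from colliding with the denominator poles; you flag this, but it is worth stating precisely.
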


\begin{proof}[Proof of Theorem \ref{main}]

\noindent First, taking $k=2$ in (\ref{b1k}) and using (\ref{j1}), (\ref{j2}), (\ref{m1}) and (\ref{m1.5}) twice, we have

\begin{equation} \label{B12sum}
\mathcal{B}^{(2)}(q) =  \frac{4}{(q)_{\infty}} \Bigg( -j(q, q^5)m(-q, q^5, q^4)  + j(q^2, q^5)m(-q^2, q^5, q^3) + \frac{(q^5; q^5)_{\infty}^2}{(-1; q^5)_{\infty}^2} \Bigg).
\end{equation}
Next, equations (4.45) and (4.46) in \cite{Hi-Mo1} state that
\begin{equation*}\label{Xm}
X(q) = 2m(-q^2,q^5,q^4) - \frac{J_{3,10}J_{5,10}}{J_{1,5}}
\end{equation*}
and
\begin{equation*} \label{chim}
\chi(q) = 2m(-q,q^5,q^2) + q\frac{J_{1,10}J_{5,10}}{J_{2,5}}, 
\end{equation*}
and applying \eqref{m2} to each of these gives
\begin{equation}\label{XmDelta}
X(q) = 2m(-q^2,q^5,q^3) + 2\Delta(-q,q^5,q^4,q^3) - \frac{J_{3,10}J_{5,10}}{J_{1,5}}
\end{equation}
and
\begin{equation} \label{chimDelta}
\chi(q) = 2m(-q,q^5,q^4) + 2\Delta(-q,q^5,q^2,q^4) + q\frac{J_{1,10}J_{5,10}}{J_{2,5}}. 
\end{equation}
Using \eqref{XmDelta} and \eqref{chimDelta} with (\ref{B12sum}), we have 

$$
\begin{aligned}
\mathcal{B}^{(2)}(q) & + \frac{2}{(q^2, q^3; q^5)_{\infty}} \chi(q) - \frac{2}{(q, q^4; q^5)_{\infty}} X(q) \\ & =  \frac{2q j(q, q^5) J_{1,10} J_{5,10}}{(q)_{\infty} J_{2,5}} + \frac{4 j(q, q^5) \Delta(-q, q^5, q^2, q^4)}{(q)_{\infty}} 
 + \frac{2j(q^2, q^5) J_{3,10} J_{5,10}}{(q)_{\infty} J_{1,5}} \\ & - \frac{4j(q^2, q^5) \Delta(-q^2, q^5, q^4, q^3)}{(q)_{\infty}} 
 + \frac{(q^5;q^5)_{\infty}^2}{(-q^5;q^5)_{\infty}^2(q)_{\infty}}.
\end{aligned}
$$

\noindent  To finish the proof of \eqref{b1chix}, we need to show that the right-hand side is equal to $-(q)_{\infty}/(-q)_{\infty}^2$.   But this is just an identity between modular forms, which may be verified with a finite computation.   We have carried this out using Garvan's MAPLE program available at 

\begin{center}
\url{http://www.math.ufl.edu/~fgarvan/qmaple/theta-supplement}. 
\end{center}

\end{proof}

\section{Concluding Remarks}
Each of the ideas in Section $3$ is well worth pursuing.  First, the transformations of Bailey are just two of the many transformations which arise from a change-of-base in Bailey pairs  \cite{Be-Wa1, Br-Is-St1, Ja-Ve1}.  One could investigate all possible Appell functions occurring as specializations of such transformations.   Second, one could look at the proofs of expressions for the classical second, third, fifth, sixth, eighth, and tenth order mock theta functions in terms of Appell-Lerch series and/or indefinite theta series.   Behind each such proof lies a Bailey pair, and iterating along the Bailey chain would then embed each classical mock theta function in an infinite family of $q$-hypergeometric mixed mock modular forms, just as the mock theta functions $f(q)$ and $\phi(q)$ are the base cases of the families $\mathcal{B}^{(k)}$ and $\mathcal{M}^{(k)}$, respectively.   Finally, one could consider the Bailey-type lemmas in \cite{lovepart} together with all of the Bailey pairs occurring in Slater's list \cite{slater} and see what kinds of mixed mock modular forms arise. 

\section*{Acknowledgements}
The authors would like to thank Kathrin Bringmann for her helpful conversations. The second author would like to thank the Institut des Hautes {\'E}tudes Scientifiques for their support during the preparation of this paper.

\end{document}